\newtheorem{theorem}{Theorem}
\newtheorem{lemma}{Lemma}
\newtheorem{proposition}{Proposition}
\newtheorem{definition}{Definition}
\newtheorem{corollary}{Corollary}
\newtheorem{notation}{Notation}
\newtheorem*{prop}{Proposition}
\newtheorem*{thm}{Theorem}
\begin{document}

\title{The Complete Picard Vessiot Closure}
\author{Andy~R. Magid}
\address{Department of Mathematics\\
        University of Oklahoma\\
        Norman OK 73019}
\subjclass{12H05}
\date{September 3, 2007}
\maketitle

\begin{abstract}
Let $F$ be a differential field with field of constants $C$,
which we assume to be algebraically closed and of characteristic
$0$.  The complete Picard--Vessiot closure $F_\infty$ of $F$ is a
differential field extension of $F$ with the same constants $C$
as $F$, which has no Picard--Vessiot extensions, and is minimal
over $F$ with these properties. There is a correspondence between
subfields of $F_\infty$ and subgroups of its differential
automorphism group, which arises because $F_\infty$ comes from $F$
via repeated Picard--Vessiot extensions. This correspondence
persists for certain normal subfields of $F_\infty$, fields which
can be characterized independently of their embedding in
$F_\infty$.
\end{abstract}

\section{Introduction}

Let $F$ be a field of characteristic zero. Polynomials over $F$
have solutions in finite Galois extensions of $F$, and taking the
compositum of all these produces a minimal field extension
$\overline{F} \supseteq F$ in which all polynomials over $F$ have
a full set of solutions. Moreover $\overline{F}$ is a (generally
infinite) Galois extension of $F$, and there is a Galois
correspondence bijection between its lattice of subextensions and
the lattice of closed subgroups of the profinite group of
automorphisms of $\overline{F}$ over $F$. Of course $\overline{F}$
is the algebraic closure of $F$, and is itself algebraically
closed: every polynomial over $\overline{F}$ has a full set of
solutions in $\overline{F}$.

The situation with differential fields is analogous but more
complicated.

Let $F$ be a differential field of characteristic zero and with
algebraically closed field of constants $C$.

Linear differential equations over $F$ have solutions in
Picard--Vessiot extension fields of $F$, and taking a compositum
of all of these produces a minimal differential field extension
$F_1 \supseteq F$ in which every linear differential equation
over $F$ has a full set of solutions. It may happen that not
every linear differential equation over $F_1$ has a solution in
$F_1$, but by repeating the previous procedure we obtain an
extension field $F_2 \supseteq F_1$ in which the equations with
$F_1$ coefficients have solutions. Continuing in this fashion
produces a tower $F=F_0 \subseteq F_1 \dots F_i \subseteq F_{i+1}
\dots$ whose union $F_\infty$ has the property that every linear
differential equation over $F_\infty$ has a full set of solutions
in $F_\infty$. We will see that $F_\infty$ is minimal over $F$
with respect to this property. (We require that none of the
extensions of $F$ introduce a new constant.)

The field $F_\infty$ is a natural object of study when
considering differential equations starting from $F$ and their
solutions. Among the things we might be concerned with are the
elements of $F_\infty$, its automorphisms, and the lattice
structure of its differential subfields.

We will characterize the finitely generated differential
extensions of $F$ which embedd in $F_\infty$: these are the
differential fields which can be embedded in a finite tower of
Picard--Vessiot extensions over $F$ (a finite tower in which each
field is a Picard--Vessiot extension of its predecessor).

The group of differential automorphisms $G(F_\infty/F)$ of
$F_\infty$ over $F$ is the inverse limit of the groups
$G(F_i/F)$. Further, the restriction from $F_i$ to $F_{i-1}$
induces a surjection $G(F_i/F) \to G(F_{i-1}/F)$ whose kernel is
$G(F_i/F_{i-1})$. This latter is a (pro)affine (pro)algebraic
group, although neither $G(F_i/F)$ nor $G(F_\infty/F)$ need be.

If $H$ is any subgroup of $G(F_\infty/F)$ then the fixed field
$F_\infty^H$ is a differential subfield of $F_\infty$ which
contains $F$. We show that this observation has a full converse:
every differential subfield $K$ of $F_\infty$ containing $F$ is
the fixed field of some subgroup of $G(F_\infty/F)$, and hence of
$G(F_\infty/K)$. (This correspondence property -- all subfields
are fixed fields of subgroups -- is called ``semi--Galois" in the
literature.)

The field $F_\infty$ should be of special interest in the case
$F=C$, and we include an example in this case.

We fix throughout the notation of this introduction. If $E
\supset F$ is a differential field extension, we use $G(E/F)$ to
denote the group of differential automorphisms of $E$ over $F$. If
$S$ is a subset of $E$, we let $F\langle S \rangle$ denote the
smallest differential subfield of $E$ that contains both $F$ and
$S$. If $K$ is another differential subfield of $E$, then
$F\langle K \rangle$ is the field theoretic compositum $FK$. For
unreferenced definitions and notation regarding the
Picard--Vessiot theory, we refer to \cite{M} and \cite{PS}. For
general reference for Picard--Vessiot closure, we cite
\cite{jalg} and \cite{PS}.

\section{Universal Property} \label{S:universal}

It is important to note that, despite the notation, the symbol
$F_\infty$ includes choices: Picard--Vessiot closures of $F_i$
are not unique, although all are isomorphic \cite[Theorem,
p.6]{jalg}. Then, using the automorphism lifting property
\cite[Theorem, p. 7]{jalg}, it is clear that any two choices of
$F_\infty$ are also isomorphic. In this section, we will show
that $F_\infty$ has an intrinsic characterization.

In the iterative construction of $F_{i+1}$ as the Picard--Vessiot
closure of $F_i$ we have that the constants of $F_{i+1}$ are those
of $F_i$, that every linear homogeneous differential equation
over $F_i$ has a full set of solutions in $F_{i+1}$, and that
$F_{i+1}$ is generated as a differential field over $F_i$ by
solutions of such equations. We use these to characterize
$F_\infty$:

\begin{theorem} \label{T:charFinfty} The extension $F_\infty
\supseteq F$ satisfies
\begin{enumerate}
\item The constants of $F_\infty$ are those of $F$. \\
\item Every every linear homogeneous differential equation over
$F_\infty$ has a full set of solutions in $F_\infty$.\\
\item If $F_\infty \supseteq E \supseteq F$ is an intermediate differential subfield
such that every linear homogeneous differential equation over $E$
has a full set of solutions in $E$ then $E=F_\infty$.
\end{enumerate}
Moreover, any differential field $K \supseteq F$ with the above
properties is differentiably $F$ isomorphic to $F_\infty$.
\end{theorem}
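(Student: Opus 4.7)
The plan is to establish the three numbered properties of the constructed $F_\infty = \bigcup_i F_i$ and then deduce the uniqueness clause by building a step-by-step isomorphism against an abstract $K$ satisfying (1)--(3).

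Properties (1) and (2) fall out directly from the construction. Each transition $F_i \subseteq F_{i+1}$ preserves constants, so induction gives that every $F_i$ has constants $C$, and hence so does their union. For (2), the finitely many coefficients of any linear homogeneous differential equation over $F_\infty$ lie in some $F_i$, and by construction $F_{i+1}$, a fortiori $F_\infty$, contains a full set of solutions.

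For (3), I would show by induction on $i$ that $F_i \subseteq E$. The base case is the hypothesis $F \subseteq E$. For the step, assume $F_i \subseteq E$. Each linear homogeneous differential equation over $F_i$ is then an equation over $E$ and so, by hypothesis on $E$, has a full solution set inside $E$. Now the $C$-vector space of solutions of such an equation lying in the ambient field $F_\infty$ has dimension at most the order of the operator, because $F_\infty$ has constants $C$; this forces the full solution set in $E$ and the full solution set in $F_{i+1}$ to coincide as subsets of $F_\infty$. Since $F_{i+1}$ is generated as a differential field over $F_i$ by exactly these solutions, $F_{i+1}\subseteq E$. Passing to the union gives $F_\infty \subseteq E$, hence equality.

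For the uniqueness statement, suppose $K\supseteq F$ satisfies (1)--(3). I would build an ascending tower $F=K_0 \subseteq K_1 \subseteq \cdots$ of differential subfields of $K$ together with differential $F$-isomorphisms $\varphi_i \colon F_i \to K_i$ with $\varphi_{i+1}|_{F_i}=\varphi_i$. Given $\varphi_i$, let $K_{i+1}$ be the differential subfield of $K$ generated over $K_i$ by full solution sets (inside $K$, which exist by (2) for $K$) of all linear homogeneous differential equations over $K_i$; property (1) for $K$ guarantees no new constants are introduced, so $K_{i+1}$ is a Picard--Vessiot closure of $K_i$. Uniqueness of Picard--Vessiot closures \cite[Theorem, p.~6]{jalg} together with the automorphism lifting property \cite[Theorem, p.~7]{jalg} then extends $\varphi_i$ to $\varphi_{i+1}\colon F_{i+1} \to K_{i+1}$. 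The union $\bigcup_i K_i \subseteq K$ is a differential subfield satisfying (1) and (2), so by property (3) for $K$ it equals $K$, and $\bigcup_i \varphi_i$ is the desired $F$-isomorphism $F_\infty \to K$.

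The main obstacle is the inductive step in the last paragraph: verifying that the $K_{i+1}$ produced inside $K$ really is a Picard--Vessiot closure of $K_i$ (so that the cited uniqueness/lifting results apply at each level) and that the extensions of $\varphi_i$ are coherent across steps. The delicate input is that maximal-dimensional solution spaces and absence of new constants inside $K$, both given by (1) and (2) for $K$, are precisely what is needed; the rest is bookkeeping.
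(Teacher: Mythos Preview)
Your treatment of properties (1)--(3) matches the paper's, and your extra sentence in (3) --- that the full solution set in $E$ and the one in $F_{i+1}$ coincide because both are maximal-dimensional $C$-subspaces of the solution space in $F_\infty$ --- is a welcome clarification that the paper leaves implicit.

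For the uniqueness clause your route differs from the paper's. The paper does not build a tower $K_0\subseteq K_1\subseteq\cdots$ inside $K$ nor invoke uniqueness of Picard--Vessiot closures; instead it extends an embedding $f_i:F_i\to K$ to $f_{i+1}:F_{i+1}\to K$ directly, by a Zorn's lemma argument on pairs $(E,f)$ with $F_i\subseteq E\subseteq F_{i+1}$, using at each stage only that a single equation $f_\mu(L)=0$ has a full solution set in $K$. Your approach packages all of this into one appeal to the uniqueness of Picard--Vessiot closures \cite[Theorem, p.~6]{jalg}: once you observe that the field $K_{i+1}$ you generate inside $K$ really is a Picard--Vessiot closure of $K_i$ (no new constants from (1), full solution sets from (2), generation by construction), the isomorphism $\varphi_{i+1}$ extending $\varphi_i$ follows. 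This is legitimate and shorter, at the cost of importing a stronger external result; the paper's argument is more self-contained. Note also that uniqueness alone already yields an isomorphism over $K_i$ (hence extending $\varphi_i$), so the automorphism lifting property is not strictly needed here.
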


\begin{proof} Since $F_\infty=\cup F_i$ and no new constants are
introduced in any $F_i$, the constants of $F_\infty$ are those of
$F$. If $Y^{(n)} + a_{n-1}Y^{(n-1)} + \dots + a_1Y^{(1)} +a_0Y=0$
is a differential equation over $F_\infty$ then for some $j$ we
have $a_i \in F_j$, $0 \leq i \leq n-1$. Then there is a full set
of solutions in $F_{j+1}$, and hence in $F_\infty$. Now suppose
$F_\infty \supseteq E \supseteq F$ is an intermediate field such
that every linear homogeneous differential equation over $E$ has
a full set of solutions in $E$. Then $F_0=F$ is contained in $E$,
and if we have $F_i$ contained in $E$ then there is a full set of
solutions in $E$ for every linear homogeneous differential
equation over $F_i$. Because $F_{i+1}$ is generated by such
solutions, it follows that $F_{i+1}$ is contained in $E$ as well.
Thus by induction $F_\infty$ is contained in $E$, so they
coincide.

Now suppose we have a differential field $K \supseteq F$ with the
three properties. We trivially have an $F$ differential embedding
$f_0:F_0 \to K$. Suppose we have an $F$ embedding $f_i:F_i \to
K$. Then we produce an $F$ embedding $f_{i+1}:F_{i+1} \to K$
extending $f_i$. We consider the set $\mathcal S$ of pairs $(f,E)$
where $F_{i+1} \supseteq E \supseteq F_i$ is an intermediate
differential field and $f$ extends $f_i$; $\mathcal S$ is ordered
in the obvious way and the union of any chain in $\mathcal S$ is
an upper bound for it in $\mathcal S$. Zorn's lemma then produces
a maximal pair $(f_\mu, E_\mu)$. If $E_\mu \neq F_{i+1}$, then
there is Picard--Vessiot extension of $F_i$ not contained in
$E_\mu$, say for the equation $Y^{(n)} + a_{n-1}Y^{(n-1)} + \dots
+ a_1Y^{(1)} +a_0Y=0$. Denote its left hand side by $L$. This
means that the Picard--Vessiot extension $E_L$ of $E_\mu$ for
$L=0$ contained in $F_{i+1}$ is proper. Let $V$ denote a full set
of solutions of $L=0$ in $E_L$ so that $E_L=E\langle V \rangle$
Let $f_\mu(L)$ denote $Y^{(n)} + f_\mu(a_{n-1})Y^{(n-1)} + \dots +
f_\mu(a_1)Y^{(1)} +f_\mu(a_0)Y$. By the properties of $K$,
$f_\mu(L)=0$ has a full set of solutions $W$. Then there is an
$F$ embedding $E_L=E_\mu\langle V \rangle \to f_\mu(E_\mu) \langle
W \rangle$ which extends $f_\mu$. This is contrary to the
maximality of $f_\mu$, and hence $E_\mu=F_{i+1}$ and we can take
$f_\mu$ for $f_{i+1}$. By induction, we obtain consistent $f_i$'s
for all $i$, and use them to define $f_\infty : F_\infty \to K$.
Since $f_\infty(F_\infty)$ has the property that every linear
homogeneous equation over it has a full set of solutions in it,
and $K$ is the only subfield of $K$ with this property, we have
that $f_\infty(F_\infty)=K$ and $f_\infty$ is the desired
isomorphism.
\end{proof}

We note that only role $F$ plays in the three conditions of
Theorem \ref{T:charFinfty} is in the third, where the minimality
condition is phrased over $F$. If $E$ is a differential subfield
of $F_\infty$ containing $F$, then $F_\infty$ also satisfies the
minimality condition over $E$. Consequently, $F_\infty$ is also
$E_\infty$, a fact we now record:

\begin{corollary} \label{C:EinftyIsFinfty} Let $E$ be a differential
subfield of $F_\infty$ with $F \subseteq E$. Then
$F_\infty=E_\infty$. In particular, all the fields $E_i$ can be
regarded as subfields of $F_\infty$.
\end{corollary}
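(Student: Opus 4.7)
The plan is to verify that $F_\infty$, viewed as an extension of $E$ rather than of $F$, satisfies the three defining conditions of Theorem \ref{T:charFinfty}. Once that is checked, the uniqueness clause of the theorem immediately identifies $F_\infty$ with $E_\infty$ (up to differential $E$-isomorphism). As the remark preceding the corollary already notes, only condition (3) mentions the base field, so most of the verification is automatic.

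First I would check condition (1): the constants of $F_\infty$ equal the constants of $F$ by hypothesis, and since $F \subseteq E \subseteq F_\infty$ the constants of $E$ are trapped between those of $F$ and those of $F_\infty$, hence equal to both. So the constants of $F_\infty$ are the constants of $E$. Condition (2) refers only to $F_\infty$ itself, so it carries over verbatim. For condition (3), suppose $F_\infty \supseteq E' \supseteq E$ is an intermediate differential subfield in which every linear homogeneous differential equation has a full set of solutions. Since $E \supseteq F$, the chain $F_\infty \supseteq E' \supseteq F$ satisfies the hypothesis of condition (3) of Theorem \ref{T:charFinfty} applied to $F_\infty$ over $F$, forcing $E' = F_\infty$.

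Having established all three properties for $F_\infty$ over $E$, the uniqueness assertion of Theorem \ref{T:charFinfty} (applied with $E$ in place of $F$ and $F_\infty$ in place of $K$) gives a differential $E$-isomorphism between $F_\infty$ and $E_\infty$, i.e.\ $F_\infty = E_\infty$. The final sentence follows because each $E_i$ in the tower constructing $E_\infty$ then sits inside $F_\infty$ via this identification. I do not anticipate any technical obstacle; the entire argument is a direct reading of the already-established characterization of $F_\infty$.
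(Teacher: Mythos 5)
Your proposal is correct and follows essentially the same route as the paper: the remark preceding the corollary observes that $F$ enters the three conditions of Theorem \ref{T:charFinfty} only through the minimality condition, which $F_\infty$ clearly satisfies over $E$ as well, and then the uniqueness clause identifies $F_\infty$ with $E_\infty$. Your verification of the constants condition and of minimality over $E$ via minimality over $F$ is exactly the intended argument.
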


The following lemma is a special case of Corollary
\ref{C:EinftyIsFinfty}  which we will need to use below.

\begin{lemma} \label{L:PVofK} Let $E$ be a differential
subfield of $F_\infty$ with $F \subseteq E$, and let $L $ be a
monic linear differential operator over $E$. Then there is a
differential subfield $E_L \subseteq F_\infty$ with $E \subseteq
E_L$ such that $E_L \supseteq E$ is a Picard--Vessiot extension
for $L$.
\end{lemma}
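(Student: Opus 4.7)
The plan is to leverage Corollary \ref{C:EinftyIsFinfty} so that $F_\infty$ can itself play the role of $E_\infty$, and then to extract the desired Picard--Vessiot extension of $E$ as the differential subfield of $F_\infty$ generated over $E$ by a full solution set of $L$.

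More concretely, the first step is to invoke Corollary \ref{C:EinftyIsFinfty} to conclude $F_\infty = E_\infty$; equivalently, $F_\infty$ satisfies conditions (1), (2), (3) of Theorem \ref{T:charFinfty} relative to $E$. By property (2) applied over $E$, the equation $L(Y)=0$, which is a linear homogeneous differential equation over $E \subseteq F_\infty$, has a full set $V$ of solutions in $F_\infty$ (so $|V| = \operatorname{ord}(L)$ and $V$ is $C$-linearly independent in $F_\infty$). Define $E_L := E\langle V \rangle \subseteq F_\infty$; this is a differential subfield of $F_\infty$ containing $E$, and by construction $E_L$ is generated over $E$ by a full set of solutions of $L$.

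It then remains to verify that $E_L \supseteq E$ is indeed a Picard--Vessiot extension for $L$. The only substantive point is the no-new-constants condition: the constants of $E_L$ lie in the constants of $F_\infty$, which by property (1) of Theorem \ref{T:charFinfty} equal $C$, and $C \subseteq F \subseteq E$, so the constants of $E_L$ coincide with the constants of $E$. Together with the fact that $E_L$ is generated over $E$ by the full solution space $V$ of a linear homogeneous equation, this is the definition of a Picard--Vessiot extension for $L$.

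I do not expect a serious obstacle here; the content of the lemma is essentially that Corollary \ref{C:EinftyIsFinfty} lets one apply the existence half of property (2) of Theorem \ref{T:charFinfty} over any intermediate $E$, and then the standard characterization of Picard--Vessiot extensions (generation by a full solution space with no new constants) closes the argument. The only minor subtlety to be careful about is confirming that the $V$ extracted from property (2) is genuinely a full $C$-linearly independent solution set of size $\operatorname{ord}(L)$, which is built into the phrase ``full set of solutions'' as used throughout the preceding discussion.
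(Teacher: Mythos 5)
Your proof is correct, but it takes a slightly different route from the one the paper gives for this lemma. The paper's argument is top-down through the closure tower: by Corollary \ref{C:EinftyIsFinfty} the Picard--Vessiot closure $E_1$ of $E$ may be taken inside $F_\infty$, and a Picard--Vessiot extension $E_L$ of $E$ for $L$ sits inside $E_1$ by the defining property of the closure, so $E_L \subseteq E_1 \subseteq F_\infty$. You instead build $E_L$ bottom-up: solve $L=0$ inside $F_\infty$ using property (2) of Theorem \ref{T:charFinfty}, set $E_L = E\langle V\rangle$, and verify the Picard--Vessiot conditions directly, with the no-new-constants requirement coming from property (1) since the constants of $E_L$ are trapped between $C$ and the constants of $F_\infty$. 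Note that on your route the appeal to Corollary \ref{C:EinftyIsFinfty} is actually superfluous: properties (1) and (2) of Theorem \ref{T:charFinfty} already apply to any equation whose coefficients lie in the subfield $E \subseteq F_\infty$, so you never need to know $F_\infty = E_\infty$. What the paper's version buys is brevity and the ability to quote the closure machinery wholesale; what yours buys is self-containment, needing only the characterization of $F_\infty$ plus the standard definition of a Picard--Vessiot extension (generation by a full solution space, which has $C$-dimension equal to $\operatorname{ord}(L)$ by the usual Wronskian bound, together with no new constants). Both are sound.
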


For the proof, we note that we have $E_1 \subseteq F_\infty$ by
the Corollary and $E_L \subseteq E_1$.

\section{Intermediate fields and the semi Galois correspondence} \label{S:inter}

\begin{definition} \label{D:intnormal} A differential subfield $E$ containing $F$ of $F_\infty$
will be called an \emph{intermediate field} (of $F_\infty$). An
intermediate field is called \emph{normal} (in $F_\infty$) if it
is preserved set--wise by every differential automorphism of
$F_\infty$ over $F$.
\end{definition}

Let $E$ be an intermediate field. By Corollary
\ref{C:EinftyIsFinfty} we have that $E_\infty=F_\infty$. It
follows that $G(F_\infty/E)=G(E_\infty/E)$ and since $E$ is the
fixed field of the latter, it is also the fixed field of the
former. Moreover, since differential automorphisms of $E$ extend
to $E_\infty$ \cite[Theorem, p. 7]{jalg}, differential
automorphisms of $E$ extend to $F_\infty$ (``automorphism lifting
property").

Now suppose $E \subset M \subset K$ are intermediate fields, with
$K$ normal. By the normality of $K$, restriction defines a group
homomorphism $G(F_\infty/E) \to G(K/E)$, which is surjective by
the automorphism lifting property. Since $E$ is the fixed field
of $G(F_\infty/E)$, it follows that $E$ is also the fixed field
of $G(K/E)$, and of course that $M$ is the fixed field of the
subgroup $G(K/M)$ of $G(K/E)$. In other words, the association $M
\mapsto G(K/M)$ is an injection from the set of differential
subfields of $K$ containing $E$ to the set of subgroups of
$G(K/E)$, with right inverse $H \mapsto K^H$.

An injection of the sort just observed is half of the Galois
correspondence, and has been termed a ``semi--Galois
correspondence" in the literature.

Once we have an internal description of when a differential field
extension $K \supseteq E$ can be embedded in $F_\infty$ so that
$E \supseteq F$ and $K$ is normal, we can state the semi--Galois
correspondence as a theorem for such extensions. This we will do
in Section \ref{S:intchar} below.

\section{Example} \label{S:example}

At this point we pause to consider an example. This example is
based on the one considered in \cite[pp 12-13]{jalg}. We let
$F=C(t)$ be the field of rational functions over $C$ with
derivation $t^\prime=1$. Inside $F_\infty$ we choose elements $y$
and $\{z_a \mid a \in C \}$ with $y^\prime = t^{-1}$ and
$z_a^\prime=((y+a)t)^{-1}$. (In fact $y \in F_1$ and $z_a \in
F_2$). Then $F\langle y \rangle=F(y)$, and $F\langle z_a \rangle
=F(y,z_a)$; we denote the latter by $E_a$. Note that $F \subset
F(y)$ and $F(y) \subset E_a$ are Picard--Vessiot extensions, and
$F \subset E_a$ is an iterated Picard--Vessiot extension: a
defining tower is $F \subset F(y) \subset E_a$. Let $\mathcal E
=\{E_a \mid a \in C \}$. (The compostium $E=F(y, \{z_a \mid a \in
C\})$ of $\mathcal E$ in $F_\infty$ is, by Lemma \ref{L:compIPV}
below, a LIPV extension of $F$.)

\medskip

The differential equations of which $y$ and $z_a$ are solutions
are, respectively, $tY^{\prime\prime}+Y^\prime=0$ and
$(y+a)tY^{\prime\prime}-(y+a+1)Y^\prime=0$. The coefficient $t$ of
the former is in the base field $F$ (and a solution of the
differential equation $Y^{\prime\prime}=0$), and the coefficient
$y+a+1$ of the latter is a solution of
$tY^{\prime\prime}+Y^\prime=0$, while the leading coefficient is a
solution of $tY^{\prime\prime\prime}+Y^{\prime\prime}=0$. Thus the
field $E$ is generated by solutions of differential equations
whose coefficients are solutions of differential equations.

\medskip

Let $\sigma \in G(F_\infty/F)$. For some notational convenience
below, we will write the action on the right. Since both $y$ and
$y^\sigma$ have derivative $t^{-1}$, we have
$y^\sigma=y+b(\sigma)$ for some $b(\sigma) \in C$. If also $\tau
\in G(F_\infty/F)$ then comparing $y^{\sigma\tau}$ and
$(y^\sigma)^\tau$ we see that $b(\sigma\tau)=b(\sigma)+b(\tau)$.
Also
$(z_a^\sigma)^\prime=(z_a^\prime)^\sigma=((y+a)t)^{-1})^\sigma=z_{a+b(\sigma)}^\prime$,
from which it follows that
$z_a^\sigma=z_{a+b(\sigma)}+c(\sigma,a)$ for some $c(\sigma,a)\in
C$. Comparing $z_a^{\sigma\tau}$ and $(z_a^\sigma)^\tau$ we find
that $c(\sigma\tau,a)=c(\sigma,a+b(\tau))+c(\tau,a)$.

These calculations show that every $\sigma$ in $G(F_\infty/F)$
carries $E$ to itself and hence that $E$ is a normal LIPV
extension of $F$ (see defintion \ref{D:LIPV} below). (In fact,
they can be used to show that $E$ is the smallest normal
extension of $F$ which contains $E_0$.)

The extensions $F(y) \subset E_a$ are Picard--Vessiot as
previously noted; this makes the compositum extension $F(y)
\subset E$ a locally Picard--Vessiot extension \cite[p. 153]{P}
(or infinite Picard--Vessiot extension \cite[p. 2]{jalg}). The
differential Galois group $G(E_a/F(y))$ is the additive algebraic
group $\mathbb G_a$ (more precisely, its $C$ points $\mathbb
G_a(C)$ are), and the differential Galois group $G(E/F(y))$ is the
proalagebraic group $\prod_{c \in C}\mathbb G_a$. (One can see the
latter by regarding the product as the set $\text{Map}(C,\mathbb
G_a)$ of (all) functions from $C$ to $C$ and map $G(E/F(y)$ to it
by $\sigma \mapsto (a \mapsto c(\sigma,a))$. If $\sigma \in
G(E/F(y))$ then $\sigma(y)=y$ so $b(\sigma)=0$ and thus $G(E/F(y))
\to \text{Map}(C,\mathbb G_a)$ is a homomorphism.)

It follows that the extension $F \subset E$ goes in steps $F
\subset F(y)$ and $F(y) \subset E$ with each step (locally)
Picard--Vessiot, but of course the extension $F \subset E$ is not.

The group $G(E/F)$ is the semidirect product $\text{Map}(C,\mathbb
G_a) \rtimes \mathbb G_a$ where $G_a=C$ acts on
$\text{Map}(C,\mathbb G_a)$ by $a\cdot f (x)=f(x+a)$: we have
$G(E/F) \to \text{Map}(C,\mathbb G_a)\rtimes \mathbb G_a$ by
$\sigma \mapsto (c(\sigma, \cdot), b(\sigma))$.

As abstract groups the semidirect product $\text{Map}(C,\mathbb
G_a) \rtimes \mathbb G_a$ is the regular wreath product $\mathbb
G_a \wr_r \mathbb G_a$.

The semidirect product also has an algebraic (pro)variety
structure: it is the product of the proaffine variety $\prod_{c
\in C}\mathbb G_a$ and the variety $\mathbb G_a$. This has as
coordinate ring the polynomial ring $C[\{X_c \vert c \in C \}, Y]$
where (reverting to semidirect product and the functional notation
for the first factor) $X_c((f,a))=f(c)$ and $Y((f,a))=a$. We use
the ``dot" notation for the action on functions on a group coming
from translation actions on the group: $\phi\cdot a (b)=\phi(ab)$
and $b\cdot \phi(a)=\phi(ab)$ \cite{HM}. Then $X_c\cdot
(f,a)((g,b))=X_c((f,a)(g,b))=f(c)+g(a+c)$ so that
\[
X_c \cdot (f,a)=f(c)+X_{c+a}.
\]

Also, $Y\cdot (f,a)((g,b))=Y((f,a)(g,b))=a+b$, so that
\[
Y\cdot (f,a)=a+Y.
\]

The displayed formulae show that left translation on
$\text{Map}(C,\mathbb G_a) \rtimes \mathbb G_a$ by the fixed
element $(f,a)$ is a morphism of proaffine varieties.

Since $(g,b) \cdot X_c((f,a))=X_c((f,a)(g,b))=f(c)+g(a+c)$,
however, we can see that right translation is not necessarily a
morphism. Consider the case where $C=\mathbb C$ is the complex
numbers and $g=\exp$ the exponential function. Then
\[
((\exp,0)\cdot X_0-x_0)((f,a))=\exp(a).
\]
It is clear that $(f,a) \mapsto \exp(a)$ is not in the polynomial
ring $C[\{X_c \vert c \in C \}, Y]$.

\section{Characterization of intermediate fields} \label{S:intchar}

We we now return to an internal characterization of intermediate
fields, and normal intermediate fields.

We begin with finitely generated extensions:

\begin{definition} \label{D:IPV}A differential extension $E$ of $F$ is an
\emph{iterated Picard--Vessiot} (IPV) extension if there is a
chain of  differential subfields $F=E_0 \subseteq E_1 \dots
\subseteq E_n=E$ such that  for each $i$ $E_{i+1}$ is a
Picard--Vessiot extension of $E_i$. We call the fields $E_0, E_1,
\dots, E_n$ a \emph{defining tower} for $E$.
\end{definition}

The notation for defining towers of IPV extensions overlaps that
for the tower of Picard--Vessiot closures. It will be clear from
context, or noted, which one is intended.

Note that a Picard--Vessiot extension of an IPV extension of $F$
is also an IPV extension of $F$. Since Picard--Vessiot extensions
of $F$ are finitely generated as fields, iterated Picard--Vessiot
extensions of $F$ are finitely generated over $F$.

\begin{theorem} \label{T:FGeqIPV} Let $E$ be a differential
subfield of $F_\infty$ finitely generated over $F$. Then $E$ is
contained in an iterated Picard--Vessiot extension of $F$.
Conversely, if $E \supseteq F$ is a subfield of an iterated
Picard--Vessiot  extension then there is a differential embedding
of $E$ over $F$ into $F_\infty$.
\end{theorem}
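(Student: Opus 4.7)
The plan is to handle the two directions separately. For the forward direction I actually prove the stronger statement that $E$ is contained in an IPV extension of $F$ sitting inside $F_\infty$; this strengthening is what permits repeated application of Lemma~\ref{L:PVofK}.

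For the forward direction I induct on the smallest $j$ with $E \subseteq F_j$, which exists because $E$ has finitely many differential generators $s_1,\dots,s_n$ and $F_\infty = \bigcup F_i$. The case $j = 0$ is immediate. For the inductive step I use that $F_j$ is locally Picard--Vessiot over $F_{j-1}$ and that the compositum of finitely many PV extensions of $F_{j-1}$ inside $F_j$ is again a PV extension of $F_{j-1}$ (a standard fact, taking the LCLM of the individual operators); this places the $s_i$ in a single PV extension $M = F_{j-1}\langle V\rangle \subseteq F_\infty$ determined by some monic operator $L$ over $F_{j-1}$, with $V$ a fundamental system of $L=0$ in $F_\infty$. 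Each $s_i$ is a differential rational expression in $V$ whose finitely many coefficients, combined with the coefficients of $L$, form a finite set $T \subseteq F_{j-1}$. Setting $F' = F\langle T\rangle$, the inductive hypothesis produces an IPV extension $N \subseteq F_\infty$ of $F$ with $F' \subseteq N$. Lemma~\ref{L:PVofK} then yields a PV extension $N_L \subseteq F_\infty$ of $N$ for $L$, which is again IPV over $F$ via the augmented defining tower. Since the constants of $F_\infty$ are $C$, every solution of $L = 0$ in $F_\infty$ is a $C$-linear combination of the fundamental system defining $N_L$, so $V \subseteq N_L$; combined with $T \subseteq N \subseteq N_L$, this places every $s_i$, and hence $E$, inside $N_L$.

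For the converse, given $E \subseteq K$ with $K$ an IPV extension of $F$ via the defining tower $F = K_0 \subseteq \dots \subseteq K_m = K$, I build an $F$-differential embedding $K \to F_\infty$ by induction on $m$ and restrict to $E$. In the inductive step an embedding $\phi : K_{m-1} \to F_\infty$ carries the operator $L'$ defining $K_m/K_{m-1}$ to $\phi(L')$ over $\phi(K_{m-1}) \subseteq F_\infty$; Lemma~\ref{L:PVofK} supplies a PV extension $P \subseteq F_\infty$ of $\phi(K_{m-1})$ for $\phi(L')$, and uniqueness of Picard--Vessiot extensions up to base-preserving isomorphism extends $\phi$ to an $F$-embedding $K_m \to P$.

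The main obstacle is the forward direction's inductive step. The naive choice of $F'$ as just the differential field generated by the coefficients of $L$ fails: then $N_L$ would contain $V$ and those coefficients, but because we do not have $F_{j-1} \subseteq N_L$ the inclusion $M \subseteq N_L$ is not automatic and the $s_i$ need not end up inside $N_L$. The fix is to enlarge $T$ to include the finitely many elements of $F_{j-1}$ that appear as coefficients in the expressions for the $s_i$ over $V$; such a finite set exists because each $s_i$ lies in the differential field $F_{j-1}\langle V\rangle$ generated over $F_{j-1}$ by the finite set $V$.
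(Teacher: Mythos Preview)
Your proof is correct and follows the same strategy as the paper: induction on the level $j$ for the forward direction (using Lemma~\ref{L:PVofK} to build an IPV extension of the inductively obtained $N$ inside $F_\infty$) and induction on tower length together with uniqueness of Picard--Vessiot extensions for the converse. The only cosmetic difference is that the paper writes each generator as a ratio of elements satisfying individual operators $L_x$ over $F_{j-1}$ and then takes the compositum of the resulting PV extensions $E_x$ of $N$, whereas you work with a single operator $L$ and instead carry along the finitely many $F_{j-1}$-coefficients expressing the $s_i$ in terms of $V$; both devices serve the same purpose of ensuring the generators land in the constructed IPV extension.
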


\begin{proof} Suppose $X$ is a finite subset of $F_\infty$.
Let $k$ be the least $i$ such that $X \subset F_{i}$. Let $E$ be
the differential field generated over $F$ by $X$ (note that $E
\subseteq F_i$). We prove that $E$ is contained in an IPV by
induction on $k$. If $k=0$ then $X \subset F$ and the assertion is
trivial. So suppose $k>0$ and that the assertion holds by
induction for values less than $k$.  Assume $X \subset F_k$. Since
$X$ is finite, there is a Picard--Vessiot extension $M \supseteq
F_{k-1}$ such that $X \subset M$. Each element of $M$ is a ratio
of elements which satisfy differential equations over $F_{i-1}$.
Let $X_0$ be a finite set of elements of $F_i$ whose ratios
contain $X$ and such that each element $x \in X_0$ satisfies a
monic linear homogeneous differential operator $L_x$ with
coefficients in $F_{i-1}$. Let $Y$ be the union of the sets of
coefficients of the $L_x$. Note that $Y$ is finite. Since $Y
\subset F_{i-1}$, by induction the differential field $N$
generated over $F$ by $Y$ is IPV. By Lemma \ref{L:PVofK}, for
each $x \in X_0$ there is a Picard--Vessiot extension $E_x$ of $N$
for $L_x$ contained in $F_\infty$. Note that $x \in E_x$. Let
$E_{X_0}$ be the compositum of the $E_x$'s. By \cite[Cor. 10 p.
156]{P}, $E_{X_0} \supseteq N$ is Picard--Vessiot (so $E_{X_0}
\supseteq F$ is IPV), and $X_0 \subset E_{X_0}$ so also $X \subset
E_{X_0}$ and hence $E \subseteq E_{X_0}$. This proves the
inductive step and shows that finitely generated extensions of $F$
in $F_\infty$ are contained in IPV subextensions.

Conversely, suppose that $E \supseteq F$ is an IPV extension with
defining tower $F=E_0 \supseteq E_1 \supseteq \dots \supseteq
E_n$. We prove by induction on $n$ that $E$ embeds over $F$ into
$F_\infty$. For $n=0$, this is trivial. So suppose $n>0$ and that
the result holds for all smaller values. Then $E_{n-1}$ embeds in
$F_\infty$ over $F$, and we may assume that in fact this embedding
is inclusion. Suppose $E_n \supseteq E_{n-1}$ is Picard--Vessiot
for an operator $L$ over $E_{n-1}$. By Lemma \ref{L:PVofK} there
is a Picard--Vessiot extension $M$ of $E_{n-1}$ for $L$ contained
in $F_\infty$, and by uniqueness of Picard--Vessiot extensions
\cite[Theorem 3.5 p. 25]{M} $E_n$ and $M$ are isomorphic over
$E_{n-1}$; this isomorphism is the desired embedding, and the
result follows by induction. And if IPV extensions of $F$ embed in
$F_\infty$, so do their subfields.
\end{proof}

Now we expand the definition to locally iterated Picard--Vessiot
extensions:

\begin{definition} \label{D:LIPV} A differential extension $E$ of $F$ is a
\emph{locally iterated Picard--Vessiot} (LIPV) extension if every
finite subset of $E$ belongs to an iterated Picard--Vessiot
subextension of $F$ contained in $E$.
\end{definition}

The analog of Theorem \ref{T:FGeqIPV} applies for LIPV
extensions. We will need to use the following lemma about
composita of IPV extensions.

\begin{lemma} \label{L:compIPV} Let $K \supseteq F$ be a
differential extension with the same contants as $F$, and let
$\mathcal I$ be a set of locally iterated Picard--Vessiot
subextensions of $F$ in $K$. Then the compositum $E$ of the
fields in $\mathcal I$ is locally iterated Picard--Vessiot.
\end{lemma}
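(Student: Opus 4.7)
The plan is to verify the LIPV property for the compositum $E$ directly, by taking an arbitrary finite subset $X \subseteq E$ and producing an IPV subextension of $F$ in $E$ that contains $X$. Since $E$ is the field--theoretic compositum of the fields $L_\alpha \in \mathcal I$, each element of $X$ is a rational expression in finitely many elements drawn from finitely many of the $L_\alpha$. Collecting these, I can choose $L_1,\ldots,L_k \in \mathcal I$ and a finite subset $Y_i \subseteq L_i$ for each $i$ so that $X \subseteq F(Y_1,\ldots,Y_k)$ as ordinary fields.

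Next, applying the LIPV hypothesis on each $L_i$, the finite set $Y_i$ lies in some IPV subextension $M_i$ of $F$ with $Y_i \subseteq M_i \subseteq L_i$. Then
\[
X \;\subseteq\; F(Y_1,\ldots,Y_k) \;\subseteq\; M_1 \cdots M_k \;\subseteq\; E,
\]
so it suffices to show that a finite compositum of IPV extensions of $F$ inside $K$ is itself IPV over $F$. An easy induction on the number of factors reduces this to the case of two factors $M_1, M_2$.

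For the two--factor claim, fix defining towers $F = A_0 \subseteq \cdots \subseteq A_n = M_1$ and $F = B_0 \subseteq \cdots \subseteq B_m = M_2$ and form the concatenated chain
\[
F = B_0 \subseteq \cdots \subseteq B_m = M_2 = M_2 A_0 \subseteq M_2 A_1 \subseteq \cdots \subseteq M_2 A_n = M_1 M_2.
\]
Each step $B_j \subseteq B_{j+1}$ is Picard--Vessiot by hypothesis. For each step $M_2 A_i \subseteq M_2 A_{i+1}$, I would invoke the standard base--change principle: if $A_{i+1}/A_i$ is Picard--Vessiot for some operator $L_i$ with solution space $V_i \subseteq A_{i+1}$, then $M_2 A_{i+1} = (M_2 A_i)\langle V_i \rangle$ is Picard--Vessiot for $L_i$ over $M_2 A_i$, since the wronskian of $V_i$ remains nonzero after the base change. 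This exhibits a defining tower for $M_1 M_2/F$, completing the inductive step and hence the lemma.

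The main obstacle is the base--change invariance used at each step of the concatenated tower: one must check that no new constants appear when enlarging the base from $A_i$ to $M_2 A_i$. This is exactly where the hypothesis that $K$ has the same constants as $F$ is essential, since it forces every compositum of differential subfields of $K$ (in particular every $M_2 A_i$) to share the constant field of $F$, and Picard--Vessiot status is then stable under such base change.
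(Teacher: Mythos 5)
Your proof is correct and follows essentially the same route as the paper: reduce to a finite compositum of IPV extensions (hence to two factors), then show the two-factor compositum is IPV by base-changing one defining tower over the other field, where each base-changed step stays Picard--Vessiot precisely because the ambient field $K$ has no new constants. The only difference is presentational — you unroll into an explicit concatenated tower and trace a finite subset of $E$ by hand, while the paper phrases the same argument as an induction on the length of one tower.
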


\begin{proof} Since by definition a LIPV extension of $F$ is the
compositum of its IPV subextensions, it suffices to prove the
lemma in the case where the elements of $\mathcal I$ are IPV
extensions. Moreover, if the lemma is true in the case that
$\mathcal I$ is finite, then $E$ will be the union of its IPV
subextensions, and hence LIPV. Hence it suffices to prove the
lemma when $\mathcal I$ consists of two subfields $M$ and $N$ with
defining towers $F=M_0 \subseteq \dots \subseteq M_m=M$ and $F=N_0
\subseteq \dots \subseteq N_n=N$. We prove that the compositum
$M_mN_n$ is IPV by induction on $n$. In the case $n=0$, $M_mF=M_m$
is IPV over $F$ by assumption. So suppose it is true for $n-1$, so
that $M_mN_{n-1}$ is IPV over $F$. Then since $N_{n-1} \subseteq
N_n$ is Picard--Vessiot, so is $M_mN_{n-1} \subseteq M_mN_n$ (here
is where we use that $K$ has no new contants). Thus $F \subseteq
M_mN_n=MN$ is IPV as desired.
\end{proof}

\begin{theorem} \label{T:eqLIPV} Let $E$ be a differential
subfield of $F_\infty$. Then $E$ is contained in a locally
iterated Picard--Vessiot extension of $F$. Conversely, if $E
\supseteq F$ is a subfield of an locally iterated Picard--Vessiot
extension then there is a differential embedding of $E$ over $F$
into $F_\infty$.
\end{theorem}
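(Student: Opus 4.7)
The plan is to mirror Theorem \ref{T:FGeqIPV} one level up: assemble a large LIPV extension as a compositum of the IPV pieces produced by Theorem \ref{T:FGeqIPV} for the forward direction, and run a Zorn embedding argument modelled on the existence portion of Theorem \ref{T:charFinfty} for the converse. Both halves lean on Lemmas \ref{L:PVofK} and \ref{L:compIPV}.

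For the forward direction, suppose $E$ is a differential subfield of $F_\infty$. For each finite subset $X \subset E$ the differential field $F\langle X\rangle$ is finitely generated, so Theorem \ref{T:FGeqIPV} provides an IPV extension $M_X \subseteq F_\infty$ of $F$ with $F\langle X\rangle \subseteq M_X$. Let $L$ be the compositum inside $F_\infty$ of all these $M_X$. Since $F_\infty$ has constants $C$, Lemma \ref{L:compIPV} shows that $L$ is LIPV over $F$, and $E \subseteq L$ by construction.

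For the converse, it suffices to show that any LIPV extension $L \supseteq F$ admits a differential embedding over $F$ into $F_\infty$, since such an embedding restricts to any subfield. Apply Zorn's lemma to
\[
\mathcal S = \{(f,N) : F \subseteq N \subseteq L \text{ a differential subfield}, \ f: N \to F_\infty \text{ an $F$-differential embedding}\},
\]
ordered by extension. It is nonempty (it contains $(\mathrm{id}_F, F)$) and any chain has the obvious union as upper bound, so a maximal element $(f_\mu, N_\mu)$ exists. Suppose for contradiction that $N_\mu \neq L$ and pick $x \in L \setminus N_\mu$. The LIPV hypothesis provides an IPV subextension $M \subseteq L$ of $F$ containing $x$, with defining tower $F = M_0 \subseteq M_1 \subseteq \cdots \subseteq M_m = M$. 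Form the compositum tower $N_\mu = N_\mu M_0 \subseteq N_\mu M_1 \subseteq \cdots \subseteq N_\mu M_m$. Each step $N_\mu M_i \subseteq N_\mu M_{i+1}$ is Picard--Vessiot for the same operator that makes $M_i \subseteq M_{i+1}$ Picard--Vessiot, invoking that $L$ (hence every subfield sitting inside it) shares constants with $F$. One now lifts $f_\mu$ up this tower: given an extension $f_i : N_\mu M_i \to F_\infty$, Lemma \ref{L:PVofK} produces a Picard--Vessiot extension of $f_i(N_\mu M_i)$ inside $F_\infty$ for the transported operator, and the uniqueness of Picard--Vessiot extensions \cite[Theorem 3.5 p. 25]{M} yields an extension $f_{i+1} : N_\mu M_{i+1} \to F_\infty$ of $f_i$. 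After $m$ steps we obtain an embedding of $N_\mu M_m$ strictly extending $f_\mu$ (since $x \in M_m \setminus N_\mu$), contradicting maximality. Hence $N_\mu = L$ and $f_\mu$ is the desired embedding.

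The main obstacle is the bookkeeping inside the Zorn step, and specifically the verification that each compositum inclusion $N_\mu M_i \subseteq N_\mu M_{i+1}$ is genuinely Picard--Vessiot. This is the point where the ``no new constants'' condition is used essentially: LIPV extensions introduce no new constants (a putative new constant would lie in some IPV subextension, hence in a finite tower of PV extensions, each preserving constants), and this propagates to every subfield of $L$. With this in hand the standard base-change statement for PV extensions applies at each stage, and the induction up the tower—and therefore the entire Zorn argument—goes through mechanically.
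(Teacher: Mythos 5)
Your proof is correct and follows essentially the same route as the paper: the forward direction is the identical compositum argument via Theorem \ref{T:FGeqIPV} and Lemma \ref{L:compIPV}, and the converse is the same Zorn's lemma argument, extending the maximal embedding one Picard--Vessiot step at a time using the no-new-constants base-change fact together with Lemma \ref{L:PVofK}. The only cosmetic differences are that the paper extends by a single compositum step $E^0K_i$ at the first level of the defining tower not contained in the maximal subfield, rather than climbing the whole tower, and cites \cite[Proposition 18 p.161]{P} for transporting the embedding across the Picard--Vessiot step where you invoke the uniqueness theorem \cite[Theorem 3.5 p. 25]{M}.
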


\begin{proof} Let $E$ be a differential subfield of $F_\infty$.
Every finite subset of $E$ belongs to an IPV extension of $F$ by
Theorem \ref{T:FGeqIPV}. Let $\mathcal I$ be the set of these
subfields; by Lemma \ref{L:compIPV} their compositum is LIPV.

Now suppose that $E$ is a LIPV extension of $F$. We construct an
embedding of $E$ into $F_\infty$ over $F$. Considering pairs of
subfields of $E$ and embeddings we can use Zorn's lemma to find a
differential subfield $E^0$ of $E$ containing $F$ and an embedding
$f^0:E^0 \to F_\infty$ over $F$ which can't be further extended.
Suppose $E^0 \neq E$, and $a \in E$, $a \notin E^0$. There is an
IPV extension $K$ of $F$ in $E$ containing $a$, and then there is
a set in the defining tower of $K$ with $K_{i-1} \subseteq E^0$
but $K_i \nsubseteq E^0$. Consider the field $E^1=E^0K_i$: this is
a Picard--Vessiot extension of $E^0$ with respect to some operator
$L$ over $E^0$. Let $f^0(L)$ denote the corresponding operator
over $f^0(E^0)$ obtained by applying $f^0$ to the coefficients of
$L$. By Lemma \ref{L:PVofK} we know that there is a
Picard--Vessiot extension $M$ of $f^0(E^0)$ for $f^0(L)$ contained
in $F_\infty$, and then by \cite[Proposition 18 p.161]{P} that
$f^0$ extends to an isomorphism $f^1$ from $E^1$ to $M$. The field
$E^1$ and the map $f^1:E^1 \to F_\infty$ contradict the maximality
of $E^0,f^0$, and so we conclude that $E^0=E$. Since LIPV
extensions embedd over $F$ into $F_\infty$, so do their subfields.
\end{proof}

The extension argument used in the proof of Theorem
\ref{T:eqLIPV} actually proves a bit more. Rather than simply
show the existence of an embedding into $F_\infty$ over $F$ of the
LIPV extension $E$, it shows that any embedding $f^o:E^o \to
F_\infty$ of a subextension $E^o$ of $E$ can be extended to all of
$E$ (we use a Zorn's Lemma argument to make $E^0$ a maximal
subfield containing $E^o$ on which the original embedding $f^o$
extends to an embedding $f^0$ and then argue as in the theorem
that $E^0=E$).

Now we turn to the definition of normality:

\begin{definition} \label{D:NormalLIPV} A locally iterated
Picard--Vessiot extension $E$ of $F$ is \emph{normal} if whenever
$K \supseteq F$ is an extension with no new constants then all
embeddings of $E$ in $K$ over $F$ have the same image.
\end{definition}

Note that Picard--Vessiot extensions always have the normality
property \cite[Proposition 3.3 p. 24]{M}. An interated
Picard--Vessiot extension which is not Picard--Vessiot will not be
normal.

We will need the following result about extensions of
isomorphisms, which is also of independent interest:

\begin{proposition} \label{P:isoTOauto} Let $K^1$ and $K^2$ be
locally iterated Picard--Vessiot extensions of $F$ inside
$F_\infty$, and suppose $\tau:K^1 \to K^2$ is an $F$ differential
isomorphism. Then there is an $F$ differential automorphism
$\sigma$ of $F_\infty$ which restricts to $\tau$ on $K^1$.
\end{proposition}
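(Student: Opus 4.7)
The plan is to reduce the statement to the remark following Theorem \ref{T:eqLIPV}, after observing that $F_\infty$ is itself LIPV over $F$.

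First I would note that by Theorem \ref{T:FGeqIPV}, every finitely generated differential subfield of $F_\infty$ lies in an IPV extension of $F$ contained in $F_\infty$; hence every finite subset of $F_\infty$ belongs to such an IPV subextension, and so $F_\infty$ is itself a locally iterated Picard--Vessiot extension of $F$ in the sense of Definition \ref{D:LIPV}.

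Next, view $\tau$ as a differential $F$-embedding $\tilde\tau : K^1 \to F_\infty$ by composing with the inclusion $K^2 \hookrightarrow F_\infty$. By the extension argument observed in the remark following Theorem \ref{T:eqLIPV}, any embedding of a subextension of a LIPV extension of $F$ into $F_\infty$ extends to an embedding of the whole LIPV extension. Applied with ambient LIPV extension $F_\infty$ itself and subextension $K^1$, this produces a differential $F$-embedding $\sigma : F_\infty \to F_\infty$ with $\sigma|_{K^1} = \tilde\tau$, i.e.\ $\sigma|_{K^1} = \tau$.

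It remains to see that $\sigma$ is onto. The image $\sigma(F_\infty)$ is an intermediate field between $F$ and $F_\infty$, and since $\sigma$ is a differential isomorphism onto its image, every linear homogeneous differential equation over $\sigma(F_\infty)$ has a full set of solutions in $\sigma(F_\infty)$ (transport the full set of solutions guaranteed inside $F_\infty$ by condition (2) of Theorem \ref{T:charFinfty}). The minimality condition (3) of Theorem \ref{T:charFinfty}, applied to $E = \sigma(F_\infty)$, then forces $\sigma(F_\infty) = F_\infty$. Thus $\sigma$ is the desired $F$-differential automorphism of $F_\infty$ extending $\tau$. The only mildly delicate point is recognizing that the Zorn's lemma extension already done once in Theorem \ref{T:eqLIPV} works equally well with $F_\infty$ as the target \emph{and} as the source, after which the minimality clause of Theorem \ref{T:charFinfty} does all the remaining work.
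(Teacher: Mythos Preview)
Your proof is correct and follows the same overall strategy as the paper: observe that $F_\infty$ is itself LIPV over $F$, invoke the extension remark after Theorem~\ref{T:eqLIPV} to extend $\tau$ to an $F$-differential embedding $\sigma:F_\infty\to F_\infty$, and then argue that $\sigma$ is surjective.

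The one genuine difference is in the surjectivity step. The paper proves that any $F$-embedding $\sigma:F_\infty\to F_\infty$ is onto by contradiction, reusing the extension argument: it sets $E^0=\sigma(F_\infty)$, $f^0=\sigma^{-1}$, extends $f^0$ to some strictly larger $E^1$, and derives a contradiction to injectivity of $f^1$. You instead appeal directly to the minimality clause~(3) of Theorem~\ref{T:charFinfty}, noting that $\sigma(F_\infty)$ is an intermediate field in which every linear homogeneous equation already has a full set of solutions. Your route is shorter and more conceptual, since it leverages the intrinsic characterization of $F_\infty$ that has already been established; the paper's route has the virtue of staying entirely within the Picard--Vessiot extension machinery of Section~\ref{S:intchar}. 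Both are perfectly valid.
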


\begin{proof} Suppose $\sigma:F_\infty \to F_\infty$ is an $F$
differential embedding. Assume $\sigma$ is not onto. Let
$E=F_\infty$, let $E^0=\sigma(F_\infty)$, and let
$f^0=\sigma^{-1}:E^0 \to F_\infty$. The extension argument in
Theorem \ref{T:eqLIPV} (in the notation of that lemma) gives a
field $E^1$, $F_\infty \supseteq E^1 \supset E^0$ and an embedding
$f^1:E^1 \to F_\infty$. If $y \in E^1$, $y \notin E^0$, and
$x=\sigma(f^1(y))$, then $y-x \neq 0$ but $f^1(y-x)=0$. This
contradiction shows that any such $\sigma$ is onto. Thus to prove
the Proposition, it suffices to show that the embedding $\tau:
K^1 \to F_\infty$ extends to an embedding $\sigma:F_\infty \to
F_\infty$. Since $F_\infty$ is LIPV over $F$, this again follows
by the extension argument of Theorem \ref{T:eqLIPV}.
\end{proof}

\begin{theorem} \label{T:normal=>normal} Let $E$ be a locally iterated
Picard--Vessiot extension of $F$ contained in $F_\infty$. Then
the following conditions are equivalent:
\begin{enumerate}
\item Every differential automorphism of $F_\infty$ over $F$
carries $E$ to itself.  \label{nn1}\\
\item For any no new constants extension $K$ of $F$, all
differential embeddings $E \to K$ over $F$ have the same image.
\label{nn2}
\end{enumerate}
\end{theorem}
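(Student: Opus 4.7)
The plan is to prove each implication separately, using Lemma \ref{L:compIPV}, Theorem \ref{T:eqLIPV}, and Proposition \ref{P:isoTOauto} as the main tools.

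The implication $(\ref{nn2}) \Rightarrow (\ref{nn1})$ is essentially immediate: take $K=F_\infty$, which has the same constants as $F$ by Theorem \ref{T:charFinfty}. For any $\sigma \in G(F_\infty/F)$, the inclusion $E \hookrightarrow F_\infty$ and the restriction $\sigma|_E : E \to F_\infty$ are two $F$-differential embeddings into a no-new-constants extension of $F$, so (\ref{nn2}) forces them to share the same image; that is, $\sigma(E)=E$.

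For $(\ref{nn1}) \Rightarrow (\ref{nn2})$, my plan is first to reduce to the case $K=F_\infty$. Given embeddings $f_1, f_2 : E \to K$ over $F$, the subfields $f_1(E)$ and $f_2(E)$ are LIPV extensions of $F$ (as $F$-isomorphic copies of $E$), and their compositum $L \subseteq K$ is itself LIPV over $F$ by Lemma \ref{L:compIPV} (this uses that $K$ has no new constants). Theorem \ref{T:eqLIPV} then provides an $F$-embedding $g : L \to F_\infty$, and since $g$ is injective it suffices to show $g(f_1(E)) = g(f_2(E))$ inside $F_\infty$.

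So it remains to handle two $F$-embeddings $h_1, h_2 : E \to F_\infty$. Each $h_i$ is an $F$-isomorphism from $E$ onto its image $h_i(E)$, which is LIPV inside $F_\infty$; Proposition \ref{P:isoTOauto} extends $h_i$ to an $F$-differential automorphism $\sigma_i$ of $F_\infty$. Hypothesis (\ref{nn1}) then gives $h_i(E) = \sigma_i(E) = E$ for $i=1,2$, so in particular $h_1(E) = h_2(E)$. The conceptually critical step is this last one: Proposition \ref{P:isoTOauto} is exactly what converts a set-theoretic statement about images of embeddings into a statement about automorphisms of $F_\infty$; the rest of the argument is bookkeeping to move the general no-new-constants situation inside $F_\infty$, where (\ref{nn1}) can bite.
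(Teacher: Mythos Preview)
Your proof is correct and follows essentially the same route as the paper: the reduction of $(\ref{nn1})\Rightarrow(\ref{nn2})$ to $K=F_\infty$ via Lemma~\ref{L:compIPV} and Theorem~\ref{T:eqLIPV}, followed by an application of Proposition~\ref{P:isoTOauto}, is exactly what the paper does. The only cosmetic difference is that the paper extends the single isomorphism $\tau_2\tau_1^{-1}:\tau_1(E)\to\tau_2(E)$ to an automorphism, whereas you extend each $h_i$ separately; your version is arguably cleaner, since it makes transparent why hypothesis~$(\ref{nn1})$ applies (it directly gives $\sigma_i(E)=E$).
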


\begin{proof} Suppose that $\sigma(E)=E$ for all $\sigma \in
G(F_\infty/F)$, and let $\tau_i:E \to K$, $i=1,2$, be embeddings
over $F$ into a no new constants extension. By Lemma
\ref{L:compIPV} the compositum $\tau_1(E)\tau_2(E)$ is a LIPV
extension of $F$, and thus by Theorem \ref{T:eqLIPV} embedds over
$F$ into $F_\infty$. We compose the $\tau_i$ with this embedding,
so we may assume that $K=F_\infty$. Now consider $\tau_1(E)
\subseteq F_\infty$ and the embedding
$\tau=\tau_2\tau_1^{-1}:\tau_1(E) \to F_\infty$. By Proposition
\eqref{P:isoTOauto}, there is $\sigma \in G(F_\infty/F)$ which
extends $\tau$. Since $\sigma(\tau_1(E))=\tau_1(E)$ by
assumption, it follows that $\tau_1(E)=\tau_2(E)$, and so
\eqref{nn1} implies \eqref{nn2}.

Now suppose that for any no new constants extension $K$ of $F$,
all differential embeddings $E \to K$ over $F$ have the same
image, and let $\sigma \in G(F_\infty/F)$. Let $\tau_i:E \to
F_\infty$ be defined as follows: let $\tau_1$ be the restriction
of $\sigma$ to $E$ and let $\tau_2$ be the inclusion. Since by
assumption $\tau_1(E)=\tau_2(E)$, we have $\sigma(E)=E$ and so
\eqref{nn1} implies \eqref{nn2}.
\end{proof}

Theorem \ref{T:normal=>normal} shows that the above definition of
normal, Definition \ref{D:NormalLIPV}, coincides with that for
intermediate fields, Definition \ref{D:intnormal}. We have
already observed the semi--Galois correspondence for the latter,
which means we now have proven, and state as a theorem, the
semi--Galois correspondence for the former:

\begin{theorem}[Fundamental Theorem for Normal LIPV Extensions]
\label{T:Fundamental} Let $K$ be a normal locally iterated
Picard--Vessiot extension of $F$. Let $\mathcal E$ denote the set
of differential subfields of $K$ containing $F$ (``set of
intermediate fields") and let $\mathcal H$ denote the set of
closed subgroups of $G(K/F)$. Then correspondences
\begin{align}
\mathcal E \to \mathcal H \qquad &\text{ and } \qquad \mathcal H
\to \mathcal E \notag \\
&\text{ by } \notag \\
E \mapsto G(K/E) \qquad &\text{ and } \qquad H \mapsto K^H \notag
\end{align}
are bijections.
\end{theorem}

\end{document}